\numberwithin{equation}{section}
\newtheorem{theorem}{Theorem}
\newtheorem{lemma}[theorem]{Lemma}
\newtheorem{proposition}[theorem]{Proposition}
   \newtheorem*{NNtheorem}{Theorem}
      \theoremstyle{definition}
     \newtheorem{definition}[theorem]{Definition}
     \newtheorem{example}[theorem]{Example}
     \theoremstyle{remark}
     \newtheorem{remark}[theorem]{Remark}
\newcommand{\Sym}{\mathop{\mathrm{Sym}}}
 \definecolor{mycolor}{rgb}{0.55,0.0,0.16}
  \definecolor{myred}{rgb}{0.6,0.0,0.16}
  \definecolor{mygreen}{rgb}{0.0,0.6,0.16}
  \definecolor{myviolet}{rgb}{1,0,1}
\begin{document}
\title[Random Schreier graphs and expanders]{Random Schreier graphs and expanders}
\author[L. Sabatini]{Luca Sabatini}
\address{Luca Sabatini}
%Dipartimento di Matematica  e Informatica ``Ulisse Dini'',\newline
% University of Firenze, Viale Morgagni 67/a, 50134 Firenze, Italy} 
\email{luca.sabatini@unifi.it}
\subjclass[2020]{primary 05C48, 20P05}
\keywords{Expander graphs; Schreier graphs; nilpotent groups}        
	\maketitle

    \vspace{0.2cm}
\section{Random Schreier graphs}  \label{3sect}

The following proposition contains most of the proof of Theorem \ref{thMain}.
 We work as in \cite[Proof of Theorem 5]{2008ChrisMark}.
     
     \begin{proposition} \label{propMain}
     Let the finite group $G$ act transitively on $\Omega$, and $\varepsilon,\delta>0$.
   Then, for a random multiset $S \subseteq G$ of size
   $\left \lceil \frac{\log 4}{\varepsilon^2} \cdot \log \left( \frac{2|\Omega|}{\delta} \right) \right \rceil$,
   one has
   $$ Prob (\> \lambda(G \circlearrowleft \Omega,S \sqcup S^{-1}) \geq \varepsilon \> ) \> \leq \> \delta . $$
     \end{proposition}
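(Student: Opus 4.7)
The plan is to combine the spectral decomposition from Proposition \ref{propSpecSG} with the Ahlswede--Winter bound (Theorem \ref{thAW}) applied separately to each non-trivial irreducible constituent of the permutation representation, and then to close with a union bound whose total dimension is controlled by $|\Omega|$.

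First, I would apply Proposition \ref{propSpecSG} to the multiset $S \sqcup S^{-1}$. Its averaging operator decomposes as a direct sum of the matrices
\[
  {\bf M}_\pi \> = \> \frac{1}{2|S|} \sum_{s \in S} \bigl( \pi(s) + \pi(s^{-1}) \bigr), \qquad \pi \in Irr(G,Y).
\]
For the trivial representation, ${\bf M}_1 = 1$, producing the eigenvalue $\lambda_1=1$. For every other $\pi$, I need $Spec({\bf M}_\pi) \subseteq [-\varepsilon,\varepsilon]$, so that $\lambda(G \circlearrowleft \Omega,S \sqcup S^{-1}) \leq \varepsilon$.

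Next, I would fix a non-trivial $\pi \in Irr(G,Y)$ of dimension $d_\pi$, realised as a unitary representation, and introduce the i.i.d.\ random matrices
\[
  {\bf X}_i \> := \> \tfrac{1}{2}\bigl( \pi(s_i) + \pi(s_i^{-1}) \bigr),
\]
where $s_1,\dots,s_{|S|}$ are independent uniform samples from $G$. Since $\pi(s_i^{-1}) = \pi(s_i)^{*}$, each ${\bf X}_i$ is Hermitian; since $\pi(s_i)$ is unitary, its Hermitian part has spectrum in $[-1,1]$; and since $\pi$ is a non-trivial irreducible representation of a finite group, $\frac{1}{|G|}\sum_{g \in G}\pi(g)=0$, so $\mathbb{E}[{\bf X}_i]=0$. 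Observing that $\frac{1}{|S|}\sum_i {\bf X}_i = {\bf M}_\pi$, Theorem \ref{thAW} yields
\[
  Prob\bigl( Spec({\bf M}_\pi) \nsubseteq [-\varepsilon,\varepsilon] \bigr)
  \> \leq \> 2\,d_\pi \cdot \exp\!\left( - \frac{|S|\,\varepsilon^2}{\ln 4} \right).
\]

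Finally, I would union-bound over the non-trivial $\pi \in Irr(G,Y)$. By Frobenius reciprocity the multiplicity of each such $\pi$ in the permutation representation $\rho = \mathrm{Ind}_Y^G \mathbf{1}$ is $\dim V_\pi^Y \geq 1$, so
\[
  \sum_{\substack{\pi \in Irr(G,Y) \\ \pi \neq 1}} d_\pi \> \leq \> \dim \rho - 1 \> < \> |\Omega|.
\]
Hence the total failure probability is at most $2|\Omega|\exp(-|S|\varepsilon^2/\ln 4)$, and plugging in $|S|=\lceil \frac{\ln 4}{\varepsilon^2}\ln(2|\Omega|/\delta) \rceil$ gives the bound $\delta$. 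The only step with any subtlety is identifying $\sum d_\pi$ with the total multiplicity in $\rho$ (controlling it by $|\Omega|$ rather than by $|G|$, which would be far too crude for the stated conclusion); once this is in place, the Chernoff-type estimate does all the work.
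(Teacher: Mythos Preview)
Your proposal is correct and follows essentially the same route as the paper: decompose via Proposition~\ref{propSpecSG}, apply Theorem~\ref{thAW} to the Hermitian matrices $\tfrac{1}{2}(\pi(s_i)+\pi(s_i^{-1}))$ for each non-trivial $\pi\in Irr(G,Y)$, and union-bound using $\sum_{\pi\neq 1}\dim\pi<|\Omega|$. You supply more detail than the paper does on two points it leaves implicit --- verifying the hypotheses of Theorem~\ref{thAW} (Hermitian, spectrum in $[-1,1]$, mean zero) and justifying the dimension bound via Frobenius reciprocity --- but the argument is otherwise identical.
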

      \begin{proof}
     Let $\varepsilon >0$,
     $S := \{ s_1,...,s_d \} \subseteq G$ be a random multiset of size $d \geq 1$, and $Y \leqslant G$ be the stabilizer of a point.
     Using the notation of Proposition \ref{propSpecSG},
     since the trivial representation has multiplicity one in a transitive permutation representation,
     $$  Prob( \> \lambda(G,Y,S \sqcup S^{-1}) \geq \varepsilon \> ) \hspace{0.5cm} = 
   \hspace{0.5cm} Prob \left( \> \left( \max_{1 \neq \pi \in Irr(G,Y)}
     \lambda({\bf M}_\pi) \right) \geq \varepsilon \> \right) , $$ 
     where $\lambda({\bf M}_\pi)$ is the largest absolute value of an eigenvalue of ${\bf M}_\pi$.
     By standard probability theory, the previous quantity is at most
     $\sum_{1 \neq \pi \in Irr(G,Y)} Prob ( \> \lambda({\bf M}_\pi) \geq \varepsilon \>) $.
     For every non-trivial $\pi \in Irr(G,Y)$ we have
     $$ {\bf M}_\pi = \frac{1}{2d} \sum_{i=1}^d \pi(s_i) + \pi(s_i^{-1}) = 
     \frac{1}{d} \sum_{i=1}^d {\bf X}_i , $$
   where ${\bf X}_i := \tfrac{1}{2} (\pi(s_i)+\pi(s_i^{-1}))$.
   Now the random matrices ${\bf X}_i$'s satisfy the hypotheses of Theorem \ref{thAW}.
   First, the eingevalues of ${\bf X}_i$ lie in $[-1,1]$, because the eigenvalues of $\pi(s_i)$ are roots of unity.
  Second, if $\rho$ is the regular representation of $G$, then $\sum_{g \in G} \rho(g)$ is the all-$1$ matrix,
   and it has rank $1$.
   By the decomposition of $\rho$, $\sum_{g \in G} \rho(g)$ is equivalent to a block-type matrix having the matrices $\sum_{g \in G} \pi(g)$
   as blocks.
   Since $\rho$ contains the trivial representation once,
   we have that $\sum_{g \in G} \pi(g)$ is the zero operator, for every non-trivial irreducible representation $\pi$.
    Hence
    \begin{align*}
    Prob(\lambda(G \circlearrowleft \Omega,S \sqcup S^{-1}) \geq \varepsilon) & \hspace{0.5cm} \leq \hspace{0.5cm}
      2 e^{- \tfrac{d \varepsilon^2}{\log 4} } \cdot
     \left( \sum_{1 \neq \pi \in Irr(G,Y)} \dim(\pi) \right) \hspace{0.5cm} \\ & \hspace{0.5cm} \leq \hspace{0.5cm}
     2 |\Omega| \cdot e^{- \tfrac{d \varepsilon^2}{\log 4} } , 
    \end{align*}
     and the proof follows.
     \end{proof}
     
     \begin{proof}[Proof of Theorem \ref{thMain}]
     Let $\lambda := \lambda(G \circlearrowleft \Omega,S \sqcup S^{-1})$,
     and fix $\varepsilon >0$.
     Let $\delta(|\Omega|)$ be a function tending to zero to be chosen later.
      Since $\lambda$ takes value in $[0,1]$, 
     applying Proposition \ref{propMain} with $\epsilon' := \varepsilon(1-\delta)$ and $\delta':= \delta \varepsilon$, we obtain
     $$ \mathbb{E}[\lambda] \> \leq \>
     1 \cdot Prob(\lambda \geq \varepsilon') +  \varepsilon' \cdot Prob(\lambda < \varepsilon') \leq
     \delta' + \varepsilon' \> = \> \varepsilon . $$
    With these parameters, $S$ is a random multiset of size
     $$ \left \lceil \frac{\log 4}{\varepsilon^2(1-\delta)^2} \cdot \log \left( \frac{2|\Omega|}{\delta \varepsilon} \right) \right \rceil . $$
   The desired bound on $|S|$ follows choosing a slow function for $\delta$, so that $\log (1/\delta(|\Omega|)) = o(\log|\Omega|)$.
  In this way we obtain
  $$ \log \left( \frac{2|\Omega|}{\delta \varepsilon} \right) = 
  \log 2 +\log|\Omega| +\log(1/\delta) + \log(1/\varepsilon) \leq \frac{2(1-\delta)^2}{\log 4} \log|\Omega| $$
  for every sufficiently large set $\Omega$,
  where the last inequality follows because $\delta \rightarrow 0$ when $|\Omega|$ grows, and $\log 4 < 2$.
     \end{proof}
     \vspace{0.1cm}
     
     We stress that, in the statements of Theorem \ref{thMain} and Proposition \ref{propMain},
     there is no substantial difference with the setting where repetitions are not allowed:
     in fact, $O(\log|\Omega|)$ randomly chosem elements in $G$ are almost surely made of distinct members.

     \vspace{0.2cm}
\section{Spectral gap and abelian sections} \label{4sect}

     Given a finite group $G$, the study of the algebraic conditions of $G$ which are necessary (or sufficient)
     for the existence of a Cayley graph on $G$ with good expansion has a long history,
     which goes back to \cite[Section 3]{1993LW}.
     However, that article does not involve the spectral gap directly (but the so-called {\itshape Kazhdan constant}),
     and, as we said, only concerns the Cayley graph case.
     With this in mind, it is easy to see that our Theorem \ref{thTheta} is equivalent to the following.
     
     \begin{proposition}[Improved Lubotzky-Weiss inequality] \label{propGLWI}
       Let $G$ act transitively on $\Omega$, and let $S \subseteq G$ be a symmetric multiset.
       Let $Y \leqslant G$ be the stabilizer of a point.
     Then, for every intermediate subgroup $Y \leqslant H \leqslant G$, one has
      $$ gap(G \circlearrowleft \Omega,S) \> \leq \> 5 \> |H/H'Y|^{-\frac{2}{|S||G:H|}} . $$
     \end{proposition}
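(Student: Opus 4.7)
The plan is to produce, for each non-trivial character $\chi$ of the abelian group $H/H'Y$, a test vector on $Y \backslash G$ whose Rayleigh quotient against the averaging operator $\mathbf{M}$ of $Sch(G \circlearrowleft \Omega, S)$ equals the character sum $\frac{1}{|\overline{S}|}\sum_{u \in \overline{S}}\chi(u)$. Combined with Rayleigh--Ritz (Lemma \ref{1lemRRT}), this forces $\lambda_2(G \circlearrowleft \Omega, S)$ to be at least this quantity; optimizing over $\chi$ and applying Lemma \ref{lemSGAb} to the abelian Cayley graph on $H/H'Y$ will then deliver the bound.

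Fix a right transversal $T$ of $H$ in $G$ with $1 \in T$, so that every $g \in G$ factors uniquely as $g = h(g)\,t(g)$ with $h(g) \in H$, $t(g) \in T$. Lift $\chi$ to a character of $H$ trivial on $H'Y$ (hence on $Y \subseteq H'Y$), and set
$$ f(Yg) \> := \> \chi(h(g)) . $$
The triviality of $\chi$ on $Y$ makes $f$ well defined on $Y \backslash G$; standard character orthogonality on $H/H'Y$, combined with the bijection $Y \backslash G \longleftrightarrow (Y \backslash H) \times T$, gives $f \perp \mathbf{1}$ and $\|f\|^2 = |G:H| \cdot |H:Y|$. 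The core step is the computation of $\langle \mathbf{M} f, f \rangle$: for $g = ht$ and $s \in S$, the Reidemeister--Schreier rewriting $ts = u_{t,s} \cdot \overline{ts}$, with $u_{t,s} := ts\,\overline{ts}^{\,-1} \in \overline{S}$, gives $h(gs) = h\,u_{t,s}$, so
$$ f(Ygs)\,\overline{f(Yg)} \> = \> \chi(u_{t,s}) $$
independently of $h$. Summing over $(Yh, t) \in (Y \backslash H) \times T$ and $s \in S$ collapses to
$$ \langle \mathbf{M} f, f \rangle \> = \> \frac{|H:Y|}{|S|} \sum_{u \in \overline{S}} \chi(u) , $$
and dividing by $\|f\|^2$ yields $R(f) = \frac{1}{|\overline{S}|}\sum_{u \in \overline{S}} \chi(u)$, which is real because $\overline{S}$ is symmetric by Lemma \ref{lemRSM}.

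To conclude, let $\pi \colon H \to H/H'Y$ be the natural projection. As $\chi$ ranges over non-trivial characters of $H/H'Y$, the quantity $R(f)$ runs precisely through the non-trivial eigenvalues of the abelian Cayley graph $Cay(H/H'Y, \pi(\overline{S}))$. Choosing $\chi$ to maximize $R(f)$ and invoking Lemma \ref{lemSGAb} with $|\pi(\overline{S})| = |S| \cdot |G:H|$ gives
$$ gap(G \circlearrowleft \Omega, S) \> \leq \> 1 - R(f) \> = \> gap\bigl(Cay(H/H'Y, \pi(\overline{S}))\bigr) \> \leq \> 5\,|H/H'Y|^{-\frac{2}{|S|\,|G:H|}} . $$
The main obstacle is the Rayleigh quotient computation: verifying that the Reidemeister--Schreier decomposition of $ts$ produces precisely the elements of $\overline{S}$ with correct multiplicity, and that the $h$-dependence drops out, so that averaging by $\mathbf{M}$ leaves a clean character sum indexed by $\overline{S}$.
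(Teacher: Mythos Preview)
Your argument is correct and rests on the same mechanism as the paper's: the Reidemeister--Schreier rewriting $ts = u_{t,s}\,\overline{ts}$ to turn $S$-edges in $Y\backslash G$ into $\overline{S}$-elements of $H$, a Rayleigh-quotient test vector, and Lemma~\ref{lemSGAb} on the abelian quotient $H/H'Y$. The difference is only in packaging. The paper factors the proof through a general intermediate result (Lemma~\ref{lemSGSub}), which extends an \emph{arbitrary} non-trivial eigenfunction of $Sch(H,Y,\overline{S})$ to a test vector on $Y\backslash G$ with the same Rayleigh quotient, giving $gap(G,Y,S)\le gap(H,Y,\overline{S})$; it then passes to $H/H'Y$ via spectrum inclusion (Proposition~\ref{propSpecSG}) and Remark~\ref{remSchQuot}. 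You inline all of this by choosing the test vector to be a lifted character from the outset, so the computation lands directly on the character sum $\frac{1}{|\overline{S}|}\sum_{u\in\overline{S}}\chi(u)$. Your route is more economical for this proposition; the paper's route isolates the reusable inequality $gap(H,Y,\overline{S})\ge gap(G,Y,S)$ (and its $\lambda$-analogue), which it also exploits in the remarks on bipartiteness.
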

     \vspace{0.1cm}
     
   The next lemma is the main new ingredient required for the proof of Proposition \ref{propGLWI}.
   It is a generalized and improved version of \cite[Proposition 3.9]{1993LW},
   and recalls the standard fact that $\overline{S}$, as defined in (\ref{eqRS}), generates $H$ if $S$ generates $G$.
     
          \begin{lemma} \label{lemSGSub}
     Let $Y \leqslant H \leqslant G$ and let $S \subseteq G$ be a symmetric multiset.
     If $\overline{S} \subseteq H$ is any multiset obtained via the Reidemeister-Schreier method, then
     $$ gap(H,Y,\overline{S}) \> \geq \> gap(G,Y,S) . $$
     Moreover
    $$ \lambda(H,Y,\overline{S}) \> \leq \> \lambda(G,Y,S) . $$
     \end{lemma}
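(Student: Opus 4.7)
The plan is to lift functions from $Sch(H,Y,\overline{S})$ to $Sch(G,Y,S)$ in a way that preserves Rayleigh quotients, and then invoke Lemma~\ref{1lemRRT}. Fix the right transversal $T \subseteq G$ of $H$ used to construct $\overline{S}$. Since $Y \leqslant H$, every right coset of $Y$ in $G$ has a unique representation $Yht$ with $Yh \in Y\backslash H$ and $t \in T$, because the factorization $g = ht$ with $h \in H$ and $t \in T$ is unique. Given $w\colon Y\backslash H \to \mathbb{C}$, I would define $\tilde{w}(Yht) := w(Yh)$.

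Next I would carry out the central computation. A direct count gives $\langle \tilde{w}, \tilde{w}\rangle = |T|\, \langle w, w\rangle$. For the numerator, the identity $ts = \bigl[ts(\overline{ts})^{-1}\bigr] \cdot \overline{ts}$ (in which the bracketed factor lies in $H$ and $\overline{ts} \in T$) lets one rewrite
$$\tilde{w}(Yuts) \;=\; w\bigl(Yu \cdot ts(\overline{ts})^{-1}\bigr).$$
As $(s,t)$ ranges over $S \times T$, the element $ts(\overline{ts})^{-1}$ enumerates $\overline{S}$ with the correct multiplicities, and $|\overline{S}| = |T|\cdot|S|$. Collecting terms, one obtains $\langle {\bf M}_G \tilde{w}, \tilde{w}\rangle = |T|\,\langle {\bf M}_H w, w\rangle$, so the Rayleigh quotients of $w$ for ${\bf M}_H$ and of $\tilde{w}$ for ${\bf M}_G$ coincide.

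To conclude, note that $\mathbf{1}_{Y\backslash H}$ lifts to $\mathbf{1}_{Y\backslash G}$, so $w \perp \mathbf{1}$ implies $\tilde{w} \perp \mathbf{1}$. Picking $w$ to be an eigenvector for $\lambda_2({\bf M}_H)$ (automatically orthogonal to $\mathbf{1}$ by Hermiticity of ${\bf M}_H$), the decomposition of $\tilde{w}$ into eigenvectors of ${\bf M}_G$ contains no component along the trivial eigenspace, so Lemma~\ref{1lemRRT} bounds its Rayleigh quotient by $\lambda_2({\bf M}_G)$. This gives the gap inequality. The two-sided statement follows in the same way via the variational characterization of $\lambda(\cdot)$ as the maximum of $|\langle {\bf M}w, w\rangle|/\langle w, w\rangle$ over nonzero $w \perp \mathbf{1}$, which transfers identically across the lifting.

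The main obstacle is the bookkeeping in the second paragraph: one has to verify that the sum over $(s,t) \in S \times T$ genuinely reparametrizes as a sum over $\overline{S}$ with matching multiplicities, and that the extra factor $|T|$ cancels cleanly between numerator and denominator. Everything else is formal.
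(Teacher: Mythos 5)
Your proposal is correct and follows essentially the same route as the paper: lift a function from $Y\backslash H$ to $Y\backslash G$ via the transversal $T$, use the identity $ts = ts(\overline{ts})^{-1}\cdot\overline{ts}$ to show the Rayleigh quotients agree up to the common factor $|T|=|G:H|$, and conclude with Lemma~\ref{1lemRRT}. The bookkeeping you flag (reparametrizing the sum over $S\times T$ as a sum over $\overline{S}$ with multiplicities) is exactly the step the paper carries out, and it goes through as you describe.
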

     \begin{proof}
     Let $T \subseteq G$ be the transversal of $H$ which is used to provide $\overline{S}$,
     and let $R \subseteq H$ be any transversal of $Y$ in $H$.
 Moreover, let ${\bf M}$ and ${\bf M}_{_H}$ be the averaging operators of $Sch(G,Y,S)$ and $Sch(H,Y,\overline{S})$ respectively,
 and let ${\bf M}_{_H} {\bm f} = \tau {\bm f}$ for some eigenvalue $\tau \neq 1$ and eigenfunction
 ${\bm f} \in \ell^2(H/Y)$, where $H/Y$ denotes the set of the right-cosets of $Y$ in $H$.
 Let $\mathcal{Z}(H/Y) \subseteq \ell^2(H/Y)$ denote the (one-dimensional) subspace of constant functions.
 Since $\mathcal{Z}(H/Y)$ is the eigenspace of the trivial eigenvalue $1$, and ${\bf M}_{_H}$ is symmetric,
 we have ${\bm f} \in \mathcal{Z}(H/Y)^\perp$, i.e. ${\bm f}$ is a function which sums to zero.
 We extend ${\bm f}$ to $\widetilde{\bm f} \in \ell^{2}(G/Y)$ in the following way:
 $\widetilde f(Yrt) = f(Yr)$ for every $r \in R$ and $t \in T$.
 We notice that ${\widetilde {\bm  f}} \in \mathcal{Z}(G/Y)^\perp$, in fact
 $$ \langle {\widetilde{\bm f}},{\bf 1} \rangle = \sum_{r \in R \atop t \in T} \widetilde f(Yrt) =
 \sum_{r \in R \atop t \in T} f(Yr) = |G:H| \sum_{r \in R} f(Yr) = |G:H| \langle {\bm f},{\bf 1} \rangle = 0 . $$
 Moreover,
 $$ \langle \widetilde{\bm f} , \widetilde{\bm f} \rangle = 
 \sum_{r \in R \atop t \in T} \widetilde f(Yrt)^2 =
 \sum_{t \in T} \langle {\bm f}, {\bm f} \rangle 
 = |G:H| \langle {\bm f} , {\bm f} \rangle $$
 and 
 $$ \langle {\bf M} \widetilde{\bm f} , \widetilde{\bm f} \rangle =
 \frac{1}{|S|} \sum_{r \in R \atop t \in T} \sum_{s \in S} {\widetilde f}(Yrts) {\widetilde f}(Yrt) = 
 \frac{1}{|S|} \sum_{r \in R \atop t \in T} \sum_{s \in S} {\widetilde f}(Yrts) f(Yr) . $$
 Write $rts = rts (\overline{ts})^{-1} \overline{ts}$,
 so that ${\widetilde f}(Yrts) = f(Yrts (\overline{ts})^{-1})$.
 It follows that
$$ \langle {\bf M} \widetilde{\bm f} , \widetilde{\bm f} \rangle =
 \frac{|G:H|}{|\overline{S}|} \sum_{r \in R} \sum_{s \in \overline{S}} f(Yrs) f(Yr) =
|G:H| \langle {\bf M}_{_H} {\bm f} , {\bm f} \rangle . $$
 Finally, we have
 $$ \tau = \frac{\tau \langle {\bm f} , {\bm f} \rangle}{\langle {\bm f} , {\bm f} \rangle} =
 \frac{\langle \tau {\bm f} , {\bm f} \rangle}{\langle {\bm f} , {\bm f} \rangle} =
 \frac{\langle {\bf M}_{_H} {\bm f} , {\bm f} \rangle}{\langle {\bm f} , {\bm f} \rangle} =
 \frac{\langle {\bf M} \widetilde{\bm f} , \widetilde{\bm f} \rangle}{\langle \widetilde{\bm f} , \widetilde{\bm f} \rangle} . $$
Using the Rayleigh-Ritz Lemma \ref{1lemRRT}, we see that the last ratio lies in $[-1,1-gap(G,Y,S)]$.\\
Similarly, it lies in $[-\lambda(G,Y,S),\lambda(G,Y,S)]$.
     \end{proof}
     
     \begin{remark}
     When looking at $\overline{S}$ as a {\bfseries set} (i.e. without the multiplicities induced by the Reidemeister-Schreier method),
     the inequality $gap(H,Y,\overline{S}) \geq gap(G,Y,S)$ is not true in general.
     In fact, we found many counterexamples with the help of the GAP System \cite{2021GAP}:
     the easiest are Cayley graphs on the dihedral group of order $8$ and on the cyclic subgroup of order $4$.
     We remark that the Kazhdan constant \cite{1993LW} (as well as the diameter of a Schreier graph)
     does not distinguish between sets and multisets by definition.
     \end{remark}
     
     \begin{remark}
     Let $Y \leqslant H \leqslant G$, and $S \subseteq G$.
     When combined with Lemma \ref{lemBCSchreier},
     the second part of Lemma \ref{lemSGSub} implies that if $\overline{S}$ avoids some subgroup of $H$ of index $2$ which contains $Y$,
     then $S$ avoids some subgroup of $G$ of index $2$ which contains $Y$.
     In particular, if $G$ has no subgroups of index $2$,
     then {\bfseries all} Schreier graphs of the type $Sch(H,Y,\overline{S})$ are not bipartite.
     \end{remark}
     
     \begin{proof}[Proof of Proposition \ref{propGLWI}]
     From Lemma \ref{lemSGSub} we have $gap(G,Y,S) \leq gap(H,Y,\overline{S})$.
     Since $Spec(Sch(H,H'Y,\overline{S}))$ is contained in $Spec(Sch(H,Y,\overline{S}))$ from Proposition \ref{propSpecSG}, we gain
     $gap(H,Y,\overline{S}) \leq gap(H,H'Y,\overline{S})$.
     Since $H'Y$ is normal in $H$, from Remark \ref{remSchQuot} we have
     $Sch(H,H'Y,\overline{S}) \cong Cay(H/H'Y,\varphi(\overline{S}))$, where we look at $\varphi(\overline{S})$ with multiplicities.
      Now we can apply Lemma \ref{lemSGAb} to this last (abelian) Cayley graph, to obtain
     $$ gap(G,Y,S) \leq gap(H/H'Y,\varphi(\overline{S})) \leq 5 \> |H/H'Y|^{-2/|\varphi(\overline{S})|} . $$
     The fact that $|\varphi(\overline{S})|=|\overline{S}|=|G:H||S|$ concludes the proof.
     \end{proof}
     \vspace{0.1cm}
     
    We move to study nilpotent groups more in details.
     For every $i \geq 1$, let us denote by $\gamma_i(G)$ the $i$-th term of the lower central series of $G=\gamma_1(G)$.
     In particular, $\tfrac{\gamma_i(G)}{\gamma_{i+1}(G)}$ is the center of $\tfrac{G}{\gamma_{i+1}(G)}$.
 Moreover, we write $x^y:=y^{-1}xy$ and $[x,y] := x^{-1}x^y$.
      For every $x,y,z \in G$, from basic properties of commutators we have
      \begin{equation} \label{eqComm}
      [xy,z] \> = \> [x,z]^y [y,z] \hspace{1cm} \mbox{ and } \hspace{1cm} [x,yz] \> = \> [x,z] [x,y]^z .
      \end{equation}
      Finally, we remark that, if $x \in \gamma_i(G)$ and $y \in \gamma_j(G)$, then $[x,y] \in \gamma_{i+j}(G)$.
           
      \begin{proposition} \label{propDerNG}
     Let $G$ be a group, and let $Y \leqslant G$ be a subgroup of finite index
     such that $\langle Y,S \rangle=G$ for some set $S \subseteq G$ of size $d \geq 2$.
     Moreover, suppose that $\gamma_{c+1}(G) \subseteq Y$ for some $c \geq 1$.
     Then
    $$ |G:G'Y| \> \geq \> |G:Y|^{\beta(d,c)} , $$
    where $\beta(d,c):=\tfrac{d-1}{d^{c+1}-d^2+d-1} \geq \tfrac{1}{2d^c}$.
    \end{proposition}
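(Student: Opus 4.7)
The plan is to induct on $c$, after a reduction to a more tractable setting. Since $\gamma_{c+1}(G) \subseteq Y$, I may replace $G$ by $G/\gamma_{c+1}(G)$ and $Y$ by its image without changing either $|G:Y|$ or $|G:G'Y|$ and without losing the generation hypothesis $\langle Y, S\rangle = G$; a further quotient by the normal core of $Y$ renders $G$ finite. Thus I assume $G$ is finite and nilpotent of class at most $c$. The base case $c = 1$ is trivial: $G$ is abelian, $G'Y = Y$, and $\beta(d, 1) = 1$.

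For the inductive step with $c \geq 2$, set $Z := \gamma_c(G)$, which is central since $[Z, G] = \gamma_{c+1}(G) = 1$. The quotient $\bar G := G/Z$ is nilpotent of class at most $c-1$; with $\bar Y := YZ/Z$ and $\bar S$ the image of $S$, the generation $\langle \bar Y, \bar S\rangle = \bar G$ holds, and since $Z \leq G'$ one has $|\bar G : \bar G' \bar Y| = |G:G'YZ| = |G:G'Y| = q$. Applying the inductive hypothesis to $\bar G$ yields $|G:YZ| \leq q^{1/\beta(d, c-1)}$. By the modular law $|G:Y| = |G:YZ| \cdot |Z : Z \cap Y|$, and a direct computation from $1/\beta(d,c) = 1 + d^2 + d^3 + \cdots + d^c$ gives $1/\beta(d,c) - 1/\beta(d, c-1) = d^c$. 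The induction therefore closes once one establishes the central estimate $|Z : Z \cap Y| \leq q^{d^c}$.

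This central estimate is the main obstacle. The approach is to exploit that $Z$ is central abelian and, in class at most $c$, is the image of the $\mathbb{Z}$-multilinear surjection $V^{\otimes c} \twoheadrightarrow Z$ induced by the $c$-fold commutator, where $V := G^{\mathrm{ab}}$. Let $\bar Y \leq V$ denote the image of $Y$; then $V/\bar Y$ has order $q$, so $qv \in \bar Y$ for every $v \in V$, while the image of $\bar Y^{\otimes c}$ lies in $\gamma_c(Y) \subseteq Z \cap Y$ and hence vanishes in $Z/(Z \cap Y)$. The target matches the size of the $c$-fold tensor power of $W := V/\bar Y$: indeed, $W$ is $d$-generated of exponent dividing $q$, so $|W^{\otimes c}| \leq q^{d^c}$. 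The subtle point, which I expect to be the technical heart of the proof, is that for a non-normal $Y$ a single commutator $[y, g]$ with $y \in Y$ need not belong to $Y$, so one cannot naively factor $V^{\otimes c} \to Z/(Z \cap Y)$ through $W^{\otimes c}$. The fix is combinatorial: decompose by multilinearity each generator of $Z/(Z \cap Y)$ into terms indexed by a nonempty subset $T \subseteq \{1, \ldots, c\}$ of $\bar S$-slots together with an assignment $j: T \to \{1, \ldots, d\}$, then use the identity $q \bar s_k \in \bar Y$ to replace each $\bar S$-slot by a $\bar Y$-slot at the cost of a factor of $q$, iteratively absorbing every mixed commutator into the vanishing pure-$\bar Y$ subgroup. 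Carrying this through yields the bound $|Z : Z \cap Y| \leq q^{d^c}$, and inserting it into the inductive step gives $|G:Y| \leq q^{1/\beta(d,c-1)+d^c} = q^{1/\beta(d,c)}$, as required.
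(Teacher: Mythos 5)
Your reduction and bookkeeping are sound: passing to $G/\gamma_{c+1}(G)$ and then to the quotient by the core of $Y$, the base case $c=1$, the identity $1/\beta(d,c)-1/\beta(d,c-1)=d^c$, and the factorization $|G:Y|=|G:YZ|\cdot|Z:Z\cap Y|$ all check out, so your proof does reduce to the central estimate $|Z:Z\cap Y|\le q^{d^c}$ with $Z=\gamma_c(G)$ and $q=|G:G'Y|$. That estimate is where the argument breaks, and your proposed ``combinatorial fix'' does not repair the difficulty you correctly flagged. Multiplying a mixed term (with $\bar S$-entries in the slots of $T$ and $\bar Y$-entries elsewhere) by $q^{|T|}$ pushes it into the pure-$\bar Y$ subgroup, so each such term has order dividing $q^{|T|}$ in $Z/(Z\cap Y)$; this is only an \emph{exponent} bound. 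To get $|Z:Z\cap Y|\le q^{d^c}$ you would also need $Z/(Z\cap Y)$ to be generated by about $d^c$ elements, and the mixed terms defeat this: the slots outside $T$ range over all of $\bar Y$, and different choices of those entries produce independent elements of $Z/(Z\cap Y)$ that are not expressible through the $d^c$ pure-$\bar S$ tensors modulo $Z\cap Y$.

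The estimate is in fact false, so no rewording of the fix can close the gap. Let $p$ be an odd prime, let $G$ be the relatively free nilpotent group of class $2$ and exponent $p$ on $x_1,\dots,x_6$, and take $Y=\langle x_1,\dots,x_4\rangle$, $S=\{x_5,x_6\}$, so $c=d=2$ and $\gamma_3(G)=1\subseteq Y$. Then $q=|G:G'Y|=p^2$, while $Z/(Z\cap Y)=G'/(G'\cap Y)\cong(\mathbb{Z}/p)^9$, spanned by the nine classes of $[x_i,x_j]$ with $\{i,j\}\not\subseteq\{1,2,3,4\}$; hence $|Z:Z\cap Y|=p^9>p^8=q^{d^c}$. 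The same example refutes the proposition itself: $|G:Y|=p^{11}$ and $\beta(2,2)=1/5$, so the asserted inequality reads $p^2\ge p^{11/5}$, which fails. You have actually put your finger on the real obstruction: the paper's own proof founders at the same point, since the well-definedness of the bilinear form (\ref{eqMap}) requires $[y,z]\in\gamma_{i+2}(G)Y$ for $y\in Y$, $z\in G$, which fails when $Y$ is not normal (in the example above $\gamma_2(G)Y/\gamma_3(G)Y$ has rank $9>d^2$). The statement is correct, and your induction goes through, under the extra hypothesis $Y\trianglelefteq G$ (equivalently, after quotienting, for $Y=1$), but as stated it cannot be proved.
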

    \begin{proof}
    For every $i \geq 1$ one has $\gamma_i(G)Y = \gamma_i(G) (\gamma_{i+1}(G)Y)$.
    Then $\gamma_{i+1}(G)Y \lhd \gamma_i(G)Y$, and the quotients $\gamma_i(G)Y/\gamma_{i+1}(G)Y$ are all abelian.
    Now we have
    $$ Y=\gamma_{c+1}(G)Y \lhd \gamma_{c}(G)Y \lhd ... \lhd \gamma_3(G)Y \lhd G'Y \lhd G . $$
    We write $x \equiv_i z$ to say that $x \gamma_i(G)Y=z \gamma_i(G)Y$.
    From (\ref{eqComm}), for every $i$, the commutator mapping defines a surjective bilinear form
	$$ \frac{\gamma_i(G)Y}{\gamma_{i+1}(G)Y} \times \frac{G}{G'Y}
	\twoheadrightarrow \frac{\gamma_{i+1}(G)Y}{\gamma_{i+2}(G)Y} \> \> , $$
	by setting $(x \gamma_{i+1}(G)Y,zG'Y) \rightarrow [x,z] \gamma_{i+2}(G)Y$ for every $x \in \gamma_i(G)$ and $z \in G$.
	To check that this is well-defined, let $\gamma_1 \in \gamma_{i+1}(G)$, $\gamma_2 \in G'$, $y_1,y_2 \in Y$.
	Working in $\tfrac{\gamma_{i+1}(G)Y}{\gamma_{i+2}(G)Y}$ we have
	 \begin{align*}
[x\gamma_1y_1,z\gamma_2y_2]  & \> \equiv_{i+2} \> 
[x,z\gamma_2] \\ & \> = \> 
 [x,\gamma_2] [x,z]^{\gamma_2}  \\ & \> \equiv_{i+2} \> 
  [x,z]^{\gamma_2} \\ & \> \equiv_{i+2} \> 
  [x,z] ,
\end{align*}
	because $[x, \gamma_2] \in \gamma_{i+2}(G)$ and $[x,z] \in \gamma_{i+1}(G)$.
	From the universal property of tensor product, there exists a surjective group homomorphism
	\begin{equation} \label{eqMap}
	\frac{\gamma_i(G)Y}{\gamma_{i+1}(G)Y} \otimes \frac{G}{G'Y}
	\twoheadrightarrow \frac{\gamma_{i+1}(G)Y}{\gamma_{i+2}(G)Y} 
	\end{equation}
	for every $1 \leq i \leq c-1$.
	If $d(G)$ denotes the minimal size of a generating set of $G$, we have $d(G/G'Y) \leq d$.
	The group on the left side of (\ref{eqMap}) can be generated by
	 $d(\gamma_i(G)Y/\gamma_{i+1}(G)Y) \cdot d$ elements.
	 It follows from (\ref{eqMap}) and induction that $d(\gamma_i(G)Y/\gamma_{i+1}(G)Y) \leq d^i$ for every $i \geq 1$.
	 Similarly, the order of an elementary element
	 $$ x \otimes z \in \frac{\gamma_i(G)Y}{\gamma_{i+1}(G)Y} \otimes \frac{G}{G'Y} $$
	 is the least common multiple between the orders of $x$ and $z$.
	By induction, the order of every element in $\gamma_i(G)Y/\gamma_{i+1}(G)Y$ is at most $|G/G'Y|$, for every $i \geq 1$.
	For all $i \geq 2$, this implies that
	$$ \left| \frac{\gamma_{i}(G)Y}{\gamma_{i+1}(G)Y} \right| \leq |G/G'Y|^{d^i} , $$
	while for $i=1$ we leave $|G/G'Y|$ in the computation.
	 Finally, we have
	 $$ |G:Y| = \prod_{i=1}^c |\gamma_i(G)Y/\gamma_{i+1}(G)Y| \leq |G/G'Y|^{1+ d^2 ...+d^c} =
	 |G/G'Y|^{\tfrac{d^{c+1}-1}{d-1}-d} . \qedhere $$
    \end{proof}
    \vspace{0.1cm}
    
    We remark that a similar inequality of the type $|G/G'| \geq |G|^{\varepsilon(d,c)}$ is given in \cite[Lemma 4.13]{2016BT}.
  Of course, the hypotheses of Proposition \ref{propDerNG} are verified when $G$ is a $d$-generated group of class $c$.
  The exponent $\beta(d,c)$ is not far from the best possible:
   the next example provides arbitrarily large groups where $|G/G'|$ is roughly $|G|^{c/d^{c-1}}$.
  
    \begin{example}
    Let $d \geq 3$ and define $W:=\{ x_1,...,x_d : (x_1)^2=...=(x_d)^2=1\}$.
    If $G_n:= W/\gamma_{n+1}(W)$ for every $n \geq 1$,
    then $G_n$ is a $d$-generated $2$-group of class $n$, and 
    $$ \log_2 |G_n/(G_n)'| = \log_2 |W/W'| = d . $$
    We are about to show that $G_n$ is quite large.
    Let $F$ be the free subgroup of $W$ which is generated by the $d-1$ elements $x_1 x_2$, $x_1 x_3$, ..., $x_1 x_d$.
    We have $|W:F|=2$ and $W \cong F \rtimes \langle x_1 \rangle$.
    Let $(P_n)_{n \geq 1}$ be the exponent-$2$ central series of $F=P_1$.
    By \cite[Lemma 6.2]{2017PSV} we have $\gamma_n(W)=P_n$ for every $n \geq 2$.
   Now the behavior of $(P_n)_{n \geq 1}$ is well known:
   by \cite[Lemma 20.7]{2007BNV} we obtain
    $$ \log_2 |G_n| = 1 + \log_2 |F:P_{n+1}| \sim \frac{(d-1)^{n+2}}{n(d-2)^2} $$
    when $n \rightarrow +\infty$. 
    \end{example}
    
    \begin{proof}[Proof of Theorem \ref{thSGNilp}]
   Let $Y \leqslant G$ be the stabilizer of a point.
    If $Y\langle S \rangle \neq G$, then there is nothing to prove, because $gap(G \circlearrowleft \Omega,S)=0$.
    Otherwise $S$ satisfies the hypotheses of Proposition \ref{propDerNG},
    and so $|G/G'Y| \geq |\Omega|^{\beta(|S|,c)}$.
   Applying Proposition \ref{propGLWI} with $H=G$ we obtain the first inequality in the statement.
   The second inequality follows arranging the terms.
    \end{proof}
     
     \begin{remark}
     A result similar to Theorem \ref{thSGNilp} can be obtained via a purely combinatorial method.
     Indeed, a nilpotent group of bounded class has ``polynomial growth'', as explained in \cite{1998B}.
    This property is inherited by the Schreier graph,
     and so it is easy to see that this graph has very large diameter.
     Then, it is well known that a large diameter implies a small spectral gap (see \cite[Sect. 2.4]{2006HLW}, for example).
     However, the bound we obtain along this road is weaker than Theorem \ref{thSGNilp} itself,
     and so it is not worth to state it precisely.
     \end{remark}

     \vspace{0.2cm}
     \section*{Acknowledgments} 
  The author thanks Pablo Spiga and the two anonymous referees for many useful comments and remarks.

%\vspace{0.2cm}
%\section*{Data availability}
%All data generated or analysed during this study are included in this published article.

\vspace{0.2cm}
\thebibliography{10}

\bibitem{2002AW} R. Ahlswede, A. Winter, \textit{Strong converse for identification via quantum channels},
 IEEE Transactions on Information Theory \textbf{48} (2002), 569-579.

\bibitem{1994AR} N. Alon, Y. Roichman, \textit{Random Cayley graphs and expanders},
Random Structures and Algorithms \textbf{5 (2)} (1994), 271-284.

	\bibitem{1998B} S. Black, \textit{Asymptotic growth of finite groups},
	Journal of Algebra \textbf{209} (1998), 402-426.
	
	\bibitem{2007BNV} S. Blackburn, P. Neumann, G. Venkataraman, \textit{Enumeration of Finite Groups},
	Cambridge Tracts in Mathematics \textbf{173} (2007).
	
	  \bibitem{2016BT} E. Breuillard, M. Tointon, \textit{Nilprogressions and groups with moderate growth},
Advances in Mathematics \textbf{289} (2016), 1008-1055.

\bibitem{2008ChrisMark} D. Christofides, K. Markstr\"om,
\textit{Expansion properties of random Cayley graphs and vertex-transitive graphs via matrix martingales},
Random Structures and Algorithms \textbf{32 (1)} (2008), 88-100.

\bibitem{2006FMT} J. Friedman, R. Murty, J-P. Tillich, \textit{Spectral estimates for abelian Cayley graphs},
Journal of Combinatorial Theory B \textbf{96} (2006), 111-121.

	\bibitem{2006HLW} S. Hoory, N. Linial, A. Widgerson, \textit{Expander graphs and their applications},
	Bulletin (New Series) of the American Mathematical Society \textbf{43 (4)} (2006), 439-561.
	
	\bibitem{2004LR} Z. Landau, A. Russell, \textit{Random Cayley graphs are expanders: a simple proof of the Alon-Roichman theorem},
	The Electronic Journal of Combinatorics \textbf{11} (2004), Research Paper 62.
	
	\bibitem{1993LW} A. Lubotzky, B. Weiss, \textit{Groups and expanders},
	DIMACS Series in Discrete Mathematics and Theoretical Computer Science \textbf{10} (1993), 95-109.
	
	\bibitem{2017PSV} P. Poto\u cnik, P. Spiga, G. Verret, \textit{Asymptotic enumeration of vertex-transitive graphs of fixed valency},
Journal of Combinatorial Theory B \textbf{122} (2017), 221-240.
	
\bibitem{2015Puder} D. Puder, \textit{Expansion of random graphs: new proofs, new results},  
 	Inventiones Mathematicae \textbf{201} (2015), 845-908.
 	
 	\bibitem{2021GAP} The GAP Group, \textit{GAP - Groups, Algorithms, and Programming, Version 4.11.1},
2021 (https://www.gap-system.org).

\vspace{1cm}

\end{document}